\definecolor{myyellow}{RGB}{242,226,149}
\definecolor{grey}{RGB}{150,150,150}
\definecolor{myblue}{RGB}{200,220,230}
\pgfplotsset{compat=newest}
\theoremstyle{plain}
\newtheorem{theorem}{Theorem}[section]
\newtheorem{lemma}[theorem]{Lemma}
\newtheorem{proposition}[theorem]{Proposition}
\theoremstyle{definition}
\newtheorem{definition}[theorem]{Definition}
\newtheorem{remark}[theorem]{Remark}
\newcommand{\comment}[1]{}
\newcommand*{\RR}{\mathbb{R}}
\newcommand*{\ZZ}{\mathbb{Z}}
\newcommand*{\eps}{\varepsilon}
\newcommand*{\supp}{\mathrm{supp}}
\definecolor{mycolor1}{rgb}{0.00000,0.44700,0.74100}%
\definecolor{mycolor2}{rgb}{0.85000,0.32500,0.09800}%
\definecolor{mycolor3}{rgb}{0.92900,0.69400,0.12500}%
\definecolor{mycolor4}{rgb}{0.49400,0.18400,0.55600}%
\renewcommand{\S}{\textsf{S}}
\newcommand{\LP}{\mathrm{LP}}
\newcommand{\DLR}{\textsc{dlr}}
\newcommand{\MC}{\textsc{mc}}
\newcommand{\pmin}{\textsf{p}^{\min}}
\newcommand{\pmax}{\textsf{p}^{\max}}
\newcommand{\bdex}{\partial^{ex}}
\newcommand{\spin}{\Sigma}
\newcommand{\dist}{\mathrm{dist}}
\newcommand{\EE}{\mathbb{E}}
\title{Convergence of linear programming hierarchies \\
for Gibbs states of spin systems}
\author[1]{Hamza Fawzi}
\author[2]{Omar Fawzi}
\affil[1]{\small{DAMTP, University of Cambridge, United Kingdom}}
\affil[2]{\small{Univ Lyon, Inria, ENS Lyon, UCBL, LIP, France}}
\begin{document}

\maketitle

\begin{abstract}
We consider the problem of computing expectation values of local functions under the Gibbs distribution of a spin system. In particular, we study two families of linear programming hierarchies for this problem. The first hierarchy imposes local spin flip equalities and has been considered in the bootstrap literature in high energy physics. For this hierarchy, we prove fast convergence under a spatial mixing (decay of correlations) condition. This condition is satisfied for example above the critical temperature for Ising models on a $d$-dimensional grid. The second hierarchy is based on a Markov chain having the Gibbs state as a fixed point and has been studied in the optimization literature and more recently in the bootstrap literature. For this hierarchy, we prove fast convergence provided the Markov chain mixes rapidly. Both hierarchies lead to an $\eps$-approximation for local expectation values using a linear program of size quasi-polynomial in $n/\eps$, where $n$ is the total number of sites, provided the interactions can be embedded in a $d$-dimensional grid with constant $d$. Compared to standard Monte Carlo methods, an advantage of this approach is that it always (i.e., for any system) outputs rigorous upper and lower bounds on the expectation value of interest, without needing an a priori analysis of the convergence speed.
\end{abstract}

\section{Introduction}

Consider a Gibbs distribution of the form
\begin{equation}
\label{eq:mu}
\mu(x) = \frac{e^{-H(x)}}{Z} \qquad (x \in \S)
\end{equation}
where $H$ is the potential function, $Z = \sum_{x \in \S} e^{-H(x)}$, and $\S$ is the state space assumed finite.  There are two fundamental computational questions that are associated to any such Gibbs distribution: (i) sampling from $\mu$, and (ii) computing expectation values, i.e., to compute $\mu(f) = \sum_{x \in \S} \mu(x) f(x)$ where $f:\S \to \RR$ is a given function that we also call observable. 

An algorithm that generates samples from $\mu$ can be used to construct a randomized algorithm to compute expectation values.
More precisely, if we have an algorithm that runs in time $T(\eps)$ and produces a sample from a distribution $\nu$ whose total variation distance from $\mu$ is $\eps$, then we have an algorithm that approximates $\mu(f)$ to within $\eps + \delta$ with running time proportional to $T(\eps)/\delta^2$ with probability, say, $2/3$, assuming that $f$ takes values in the interval $[-1,1]$. Existing algorithms for (i), and hence (ii), are mostly based on Markov Chain Monte Carlo (MCMC). The idea is to run a Markov chain whose stationary distribution is $\mu$ until approximate convergence. The running time $T(\eps)$ is then proportional to the mixing time of the Markov chain. Despite its immense success, one drawback of most MCMC methods is the lack of a simple way to certify convergence.\footnote{The technique of coupling from the past~\cite{propp1996exact} is a notable exception where exact convergence can be certified. It is in general more costly that standard MCMC.} In general, one needs to analyze the mixing time of the Markov chain beforehand, which is hard in many cases.

In this paper we focus on the problem of computing expectation values $\mu(f)$ and we are interested in \emph{certified algorithms}. A certified algorithm will return, for each parameter $\ell$ quantities $\pmin_{\ell}$ and $\pmax_{\ell}$ such that $\pmin_{\ell} \leq \mu(f) \leq \pmax_{\ell}$. The parameter $\ell$ controls the running time of the algorithm, and as $\ell\to\infty$ we have $\pmax_{\ell} - \pmin_{\ell} \to 0$. Importantly, an a priori analysis of the convergence in $\ell$ is not needed to get certified bounds on $\mu(f)$. By computing $\pmin_{\ell}$ and $\pmax_{\ell}$ we have achieved an $\eps$-approximation of the desired quantity with $\eps=\pmax_{\ell}-\pmin_{\ell}$.

\paragraph{Spin models} We focus here on spin models where the state space $\S$ is the Cartesian product $\S = \spin^{V}$, where $\spin = \{-1,+1\}$ and $V$ is the vertex set of a graph $G = (V , E)$. The set $V$ is mostly assumed to be finite of size $n$, but we will highlight some of the results when they can equally be defined when $V$ is infinite. The potential function $H$ is of the form 
\begin{equation}
\label{eq:HLambda}
H(x) = \beta \sum_{e=\{i,j\} \in E} h_{e}(x_i,x_j),
\end{equation}
where $h_{e} : \spin^2 \to \RR$ is a two-site interaction term which is assumed to be symmetric, i.e., $h_{e}(x_i,x_j) = h_{e}(x_j,x_i)$. For example, the ferromagnetic (resp. antiferromagnetic) Ising model corresponds to $h(x_i, x_j) = -x_i x_j$ (resp. $+x_i x_j$). The parameter $\beta \geq 0$ plays the role of an inverse-temperature. Note that when $\beta=0$ then $\mu$ is the uniform distribution on $\S$, and when $\beta=+\infty$, $\mu$ is supported on the states $x \in \S$ that minimize $H$ (i.e., ground states).


\paragraph{Contributions} In this paper we study certified algorithms to approximate $\mu(f)$ when $f$ is a local function, i.e., depending only on variables in a set $B \subset V$ of small  size. Note that the antiferromagnetic model encodes the \textsc{Max Cut} problem for $\beta \to \infty$ and as such, computing $\mu(f)$ in general is hard. This means we have to make assumptions on $H$ in order to obtain provable convergence guarantees. We study two hierarchies of linear programs giving upper and lower bounds on $\mu(f)$. The two hierarchies are based on two different characterizations of the Gibbs measure:

\begin{enumerate}
\item The first one is based on imposing the so called DLR (Dobrushin-Lanford-Ruelle) equations~\cite{fvbook} locally on a set $\Lambda \subseteq V$. The hierarchy is thus parametrized by sets $\Lambda$ satisfying $B \subseteq \Lambda \subseteq V$. For every such $\Lambda$, we define linear programs giving upper and lower bounds on the quantity of interest: 
\begin{equation}
\label{eq:lp-dlr-min-max}
\LP^{\min}_{\Lambda, \DLR}(f) \leq \mu(f) \leq \LP^{\max}_{\Lambda, \DLR}(f).
\end{equation}
Note that the interval $[\LP^{\min}_{\Lambda, \DLR}(f), \LP^{\max}_{\Lambda, \DLR}(f)]$ is monotone nonincreasing as a function of $\Lambda$ and for $\Lambda = V$, we have equality in~\eqref{eq:lp-dlr-min-max}. The linear programs parametrized by $\Lambda$ have a number of variables and constraints that are of order $2^{\Theta(|\Lambda|)}$. In Section~\ref{sec:lp-dlr}, we prove that under a spatial mixing condition on the Gibbs distribution $\mu$, we have  $|\LP^{\max}_{\Lambda, \DLR}(f) - \LP^{\min}_{\Lambda, \DLR}(f)| \leq \|f\|_{\infty} c_1 e^{-c_2\dist(B, \Lambda^c)}$ for some constants $c_1,c_2 > 0$ (see Theorem~\ref{thm:dlr-convergence}) and $\Lambda^c = V \setminus \Lambda$. Here $\dist$ denotes the distance in the graph $G$. This implies that to achieve an approximation of $\eps$, it suffices to choose the set $\Lambda$ so that $\dist(B, \Lambda^c)$ is sufficiently large while keeping $|\Lambda|$ as small as possible. The spatial mixing condition is satisfied for example for Ising models on a $d$-dimensional grid provided $\beta$ is below the critical inverse-temperature.

\item For the second hierarchy which is described in Section~\ref{sec:lp-mc}, we consider a Markov chain whose unique stationary distribution is $\mu$. Such Markov chains have been widely studied in the literature on spin systems, cf., Glauber dynamics \cite{martinelli1999lectures}. If $P$ is the Markov kernel describing the Markov chain, then stationarity of $\mu$ corresponds to the equations $\mu(Pg) = \mu(g)$ for all observables $g$. By imposing this condition on a set of local functions $g$ supported on some $\Lambda \subset V$, we arrive at a linear programming relaxation on the stationary distributions of $P$. This allows us to obtain upper and lower bounds on the expectation value $\mu(f)$ for any local function $f$ supported on $B \subseteq \Lambda \subseteq V$
\begin{equation}
\label{eq:lp-mc-min-max}
\LP^{\min}_{\Lambda, \MC}(f) \leq \mu(f) \leq \LP^{\max}_{\Lambda, \MC}(f).
\end{equation}
The size of the linear programs defining $\LP^{\min}_{\Lambda, \MC}(f)$ and $\LP^{\max}_{\Lambda, \MC}(f)$ are also of the order of $2^{\Theta(|\Lambda|)}$, however they are slightly smaller than the one corresponds to the first hierarchy. In fact, this hierarchy is provably weaker than the first one in the sense that $\LP^{\min}_{\Lambda, \MC}(f) \leq \LP^{\min}_{\Lambda, \DLR}(f)$ and $\LP^{\max}_{\Lambda, \DLR}(f) \leq \LP^{\max}_{\Lambda, \MC}(f)$.
Nevertheless, we show in Theorem~\ref{thm:convergenceLP} that when the Markov chain has a mixing time of $O(n \log n)$, then we also have $|\LP^{\max}_{\Lambda, \DLR}(f) - \LP^{\min}_{\Lambda, \DLR}(f)| \leq \|f\|_{\infty} c'_1 e^{-c'_2 \dist(B, \Lambda^c)}$ for some constants $c'_1,c'_2 > 0$.
\end{enumerate}

The convergence result of the first hierarchy relies on the spatial mixing property, a property that solely depends on the Gibbs distribution. The convergence result of the second hierarchy however, relies on the fast temporal mixing of the Markov chain $P$ whose invariant distribution is $\mu$. It turns out that spatial mixing and fast temporal mixing of Glauber Markov chains are equivalent as shown in~\cite{stroock1992logarithmic}. In fact, our proof of convergence for the second hierarchy uses a result from~\cite{dyer2004mixing} providing a combinatorial argument for the fact that fast temporal mixing implies spatial mixing. 


\begin{figure}[ht]
  \centering
  \includegraphics[page=4,width=5cm]{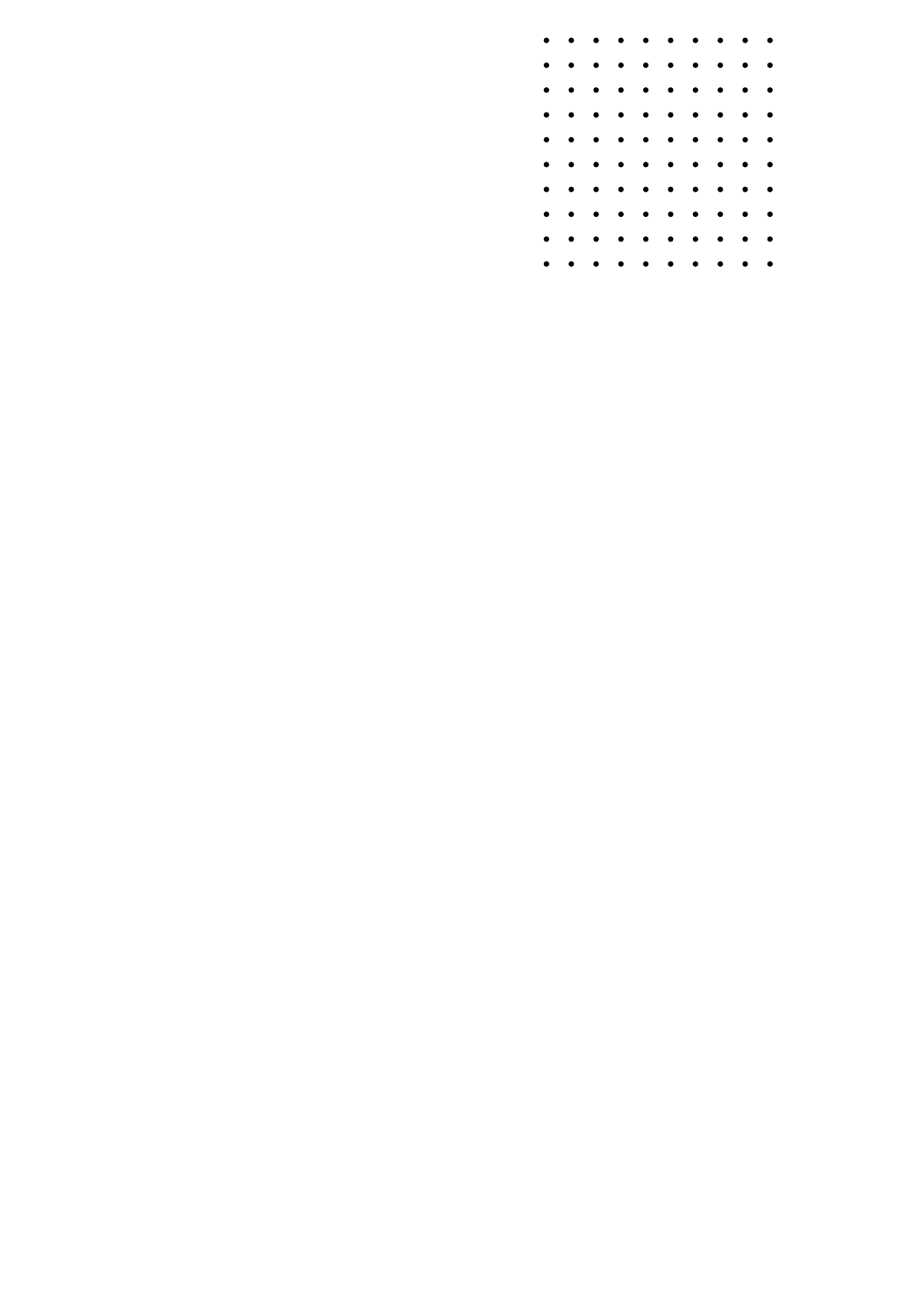}
  \caption{The linear programming hierarchies studied in this paper compute expectation values, with respect to the Gibbs distribution $\mu$, of observables $f:\S\to \RR$ with small support $\supp(f) = B$. The hierarchies are indexed by subsets $\Lambda \supseteq B$, and convergence is expressed in the distance $\dist(B,\Lambda^c)$.}
\end{figure}

\paragraph{Related work} The motivation for this paper came from recent work in the theoretical physics literature studying the bootstrap method, and in particular the papers \cite{cho2022bootstrapping,cho2023bootstrap} where the two hierarchies considered here have been studied numerically, and where asymptotic convergence in the case of infinite systems have been proven. A question left open in these works is the fast convergence away from criticality, and our goal in this paper is to address this question.
We note that the use of linear programming to compute moments of invariant distributions of general Markov chains goes back to the late 1990s, see e.g., \cite{lasserre-mc} and \cite[Chapter 12]{lasserre-mc-book}. The problems considered in this paper are also closely related to the literature on graphical models and variational inference, see e.g., \cite{wainwright2008graphical} and references therein. We note in particular the  paper \cite{risteski2016calculate} which analyzes a convex programming hierarchy to estimate the free energy of Ising models in the case of dense graphs. Another recent paper closely connected to our paper is \cite{bach2024sum} which proposes new algorithms based on sums of squares and quantum entropies to estimate log-partition functions. One difference between our paper and the two previously-cited papers is that we consider the problem of computing expectation values of observables directly, rather than the log-partition function.
Finally we note that for quantum spin systems, certified algorithms for computing expectation values were recently proposed in~\cite{fawzi2024certified}.

\paragraph{Notations} For a subset $\Lambda \subseteq V$, the external boundary of $\Lambda$ is defined as
\begin{equation}
\label{eq:bdex}
\bdex \Lambda = \{j \in \Lambda^c : \exists i \in \Lambda, \{i,j\} \in E\}
\end{equation}
and we let $\bar{\Lambda} = \Lambda \cup \bdex \Lambda$.
For a spin configuration $x \in \S = \spin^V$ and $i \in V$ we let $x^i$ be the configuration where the $i$'th spin is flipped.
Given a function $f:\S\to \RR$, $i \in V$ and $x \in \S$ we let
\begin{equation}
\label{eq:grad}
\nabla_i f(x) = f(x^i) - f(x).
\end{equation}
The support of a function $f:\S\to \RR$ is the set of spins that $f$ depends on, i.e.,
\begin{equation}
\label{eq:suppf}
\supp(f) = \{i \in V : \nabla_i f \neq 0\}.
\end{equation}
Note that if $\supp(f) \subseteq \Lambda$, then $\mu(f) = \mu_{\Lambda}( f )$ where $\mu_{\Lambda}$ is the marginal of $\mu$ on the sites $\Lambda$ and we slightly abused notation by using the same notation for $f$ and its restriction to $\spin^{\Lambda}$.
Finally, given an arbitrary finite set $X$ we let $D(X) = \{p:X\to \RR_+ : \sum_{x \in X} p(x) = 1\}$ be the set of probability distributions on $X$.

\section{Linear programming relaxation via the DLR equations}
\label{sec:lp-dlr}

Note that the Gibbs distribution $\mu$ defined in \eqref{eq:mu} satisfies the following identity, for any $x \in \S$ and $i \in V$:
\[
\frac{\mu(x)}{\mu(x^i)} = e^{\nabla_i H(x)}.
\]
In fact, $\mu$ is the unique probability distribution satisfying these equalities. Now fix $\Lambda \subseteq V$ and assume that $i \in \Lambda$. By locality of the Hamiltonian, the support of $\nabla_i H$ is included in $\bar{\Lambda}$ and so it is easy to see that the equation above is also true for the marginal $\mu_{\bar \Lambda}$ on $\bar{\Lambda} = \Lambda \cup \bdex \Lambda$:
\begin{equation}
\label{eq:spinflip}
\mu_{\bar \Lambda}(x) = \mu_{\bar \Lambda}(x^i) e^{\nabla_i H(x)} \qquad \forall x \in \spin^{\bar \Lambda} \;\; \forall i \in \Lambda.
\end{equation}
These are known as the spin-flip equations, or the DLR equations. This leads us to define
\begin{equation}
\label{eq:LP-DLR}
\LP_{\Lambda, \DLR} = \Big\{ \nu \in D(\spin^{\bar{\Lambda}}) : \nu(x) = \nu(x^i) e^{\nabla_i H(x)} \quad \forall x \in \spin^{\bar \Lambda}, i \in \Lambda \Big\}.
\end{equation}
By construction, the marginal $\mu_{\bar \Lambda}$ belongs to $\LP_{\Lambda, \DLR}$.
Given a function $f:\S\to \RR$ supported on $\Lambda$, we define the following upper and lower bounds on the expectation value $\mu(f)$:
\begin{align}
\LP^{\max}_{\Lambda, \DLR}(f) = \max \{ \nu(f) : \nu \in \LP_{\Lambda, \DLR}\} \quad & \quad \LP^{\min}_{\Lambda, \DLR}(f) = \min \{ \nu(f) : \nu \in \LP_{\Lambda, \DLR}\}.
\end{align}
Note that $\LP^{\max}_{\Lambda, \DLR}(f)$ and $\LP^{\min}_{\Lambda, \DLR}(f)$ are linear programs having $|\spin|^{|\bar \Lambda|}$ variables and $|\Lambda| |\spin|^{|\bar \Lambda|}$ equality constraints.

\paragraph{Relation to local Gibbs states} The following proposition gives an alternative characterization of the polyhedron $\LP_{\Lambda, \DLR}$. Before stating the proposition, we need to introduce the following important definition.
\begin{definition}
Let $\Lambda \subset V$ and $\eta \in \spin^{\bdex \Lambda}$. We define the \emph{local Gibbs state} on $\Lambda$ with boundary conditions $\eta$ to be
\[
\mu_{\Lambda}^{\eta}(x) = \frac{1}{Z_{\Lambda}^{\eta}} \exp(-H^{\eta}_{\Lambda}(x)) \qquad \forall x \in \spin^{\Lambda}
\]
where
\[
H_{\Lambda}^{\eta}(x) = \sum_{\substack{\{i,j\} \in E\\ i,j \in \Lambda}} h_{ij}(x_i, x_j) + \sum_{\substack{\{i,j\} \in E\\ i \in \Lambda, j \in \bdex \Lambda}} h_{ij}(x_i, \eta_j)
\]
is the Hamiltonian truncated to $\Lambda$ with boundary conditions $\eta$ and $Z_{\Lambda}^{\eta} = \sum_{x \in \spin^{\Lambda}} \exp(-H^{\eta}_{\Lambda}(x))$. 
\end{definition}

\begin{proposition}
\label{prop:DLRconv}
If $\nu \in \LP_{\Lambda, \DLR}$ then there is a probability distribution $p(\eta)$ on $\spin^{\bdex \Lambda}$ such that for any $f:\S\to \RR$ supported on $\Lambda$ we have
\[
\nu(f) = \sum_{\eta \in \spin^{\bdex \Lambda}} p(\eta) \mu_{\Lambda}^{\eta}(f).
\]
\end{proposition}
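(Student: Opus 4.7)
The plan is to show that the DLR equations pin down $\nu$ to be, slice by slice in the boundary configuration, proportional to the local Gibbs weight, and then read off the probability distribution $p(\eta)$ as the induced marginal on $\bdex\Lambda$.

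First, I would unpack what the spin-flip equation says once we fix a boundary condition. For $i \in \Lambda$ and $x \in \spin^{\bar\Lambda}$ write $x = (x_\Lambda, \eta)$ with $\eta = x_{\bdex\Lambda}$; since $i \in \Lambda$, flipping $x_i$ does not touch $\eta$, and the local nature of $H$ implies that $\nabla_i H(x)$ depends only on $x_i$ and its neighbors (all in $\bar\Lambda$). A direct check shows $\nabla_i H(x) = H^\eta_\Lambda(x_\Lambda^i) - H^\eta_\Lambda(x_\Lambda)$, where $x_\Lambda^i$ denotes the spin flip performed inside $\Lambda$. Hence~\eqref{eq:spinflip} rewrites as
\[
\nu(x_\Lambda, \eta)\,e^{H^\eta_\Lambda(x_\Lambda)} \;=\; \nu(x_\Lambda^i, \eta)\,e^{H^\eta_\Lambda(x_\Lambda^i)} \qquad \forall i \in \Lambda.
\]
Since the hypercube $\spin^\Lambda$ is connected under single-site flips, the function $x_\Lambda \mapsto \nu(x_\Lambda, \eta)\,e^{H^\eta_\Lambda(x_\Lambda)}$ is constant in $x_\Lambda$ for each fixed $\eta$; call this constant $c(\eta) \geq 0$.

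Second, I would define $p(\eta) \coloneqq \sum_{x_\Lambda \in \spin^\Lambda} \nu(x_\Lambda, \eta) = c(\eta)\,Z^\eta_\Lambda$. By construction $p(\eta)\geq 0$ and $\sum_{\eta} p(\eta) = \sum_{(x_\Lambda,\eta)}\nu(x_\Lambda,\eta) = 1$, so $p$ is a probability distribution on $\spin^{\bdex \Lambda}$. The conclusion of the previous paragraph then reads $\nu(x_\Lambda,\eta) = p(\eta)\,\mu^\eta_\Lambda(x_\Lambda)$ whenever $p(\eta) > 0$ (and both sides vanish otherwise). For any $f$ supported on $\Lambda$,
\[
\nu(f) \;=\; \sum_{\eta}\sum_{x_\Lambda} \nu(x_\Lambda,\eta)\,f(x_\Lambda) \;=\; \sum_{\eta} p(\eta)\sum_{x_\Lambda} \mu^\eta_\Lambda(x_\Lambda)\,f(x_\Lambda) \;=\; \sum_{\eta} p(\eta)\,\mu^\eta_\Lambda(f),
\]
which is the claimed decomposition.

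The only nontrivial step is the first one: making sure that when we restrict the DLR equation~\eqref{eq:spinflip} to a fixed boundary $\eta$ we exactly recover the gradient of the truncated Hamiltonian $H^\eta_\Lambda$ rather than of the full $H$. This is essentially bookkeeping with the locality of $H$ and the fact that $i \in \Lambda$ so that all edges incident to $i$ either lie inside $\Lambda$ or cross from $\Lambda$ to $\bdex\Lambda$ and thus are accounted for in $H^\eta_\Lambda$. Once this identification is made, everything else is an immediate consequence of connectivity of the flip graph on $\spin^\Lambda$.
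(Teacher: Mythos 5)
Your proof is correct and follows essentially the same route as the paper: fix the boundary $\eta$, observe that the DLR constraints force the conditional of $\nu$ on $\Lambda$ to be the local Gibbs state $\mu^\eta_\Lambda$, and take $p(\eta)$ to be the boundary marginal of $\nu$. You simply make explicit the connectivity-of-the-flip-graph step and the $p(\eta)=0$ edge case, which the paper leaves implicit.
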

\begin{proof}
Let $\nu \in \LP_{\Lambda, \DLR}$. Fix $\eta \in \spin^{\bdex \Lambda}$ an arbitrary spin configuration on $\bdex \Lambda$. The constraint in \eqref{eq:LP-DLR} tells us that for any $x \in \spin^{\Lambda}$
\[
\frac{\nu(x,\eta)}{\nu(x^i,\eta)} = \exp(\nabla_i H^{\eta}_{\Lambda}(x)) \qquad \forall i \in \Lambda.
\]
This means that the conditional distribution of $\nu$ on $\Lambda$ given that the boundary values are $\eta$ is precisely equal to the local Gibbs distribution $\mu_{\Lambda}^{\eta}$. Hence if we let $p(\eta) = \sum_{x \in \spin^{\Lambda}} \nu(x,\eta)$ be the marginal of $\nu$ on the boundary, we get the desired equality.
\end{proof}

\paragraph{Convergence} The main result in this section shows that if $\mu$ has spatial mixing, then the linear programming-based upper and lower bounds will converge exponentially fast (in $\dist(\supp(f),\Lambda^c)$) to $\mu(f)$. We adopt the following definition of spatial mixing, which corresponds to weak spatial mixing in~\cite[Definitions 2.3]{martinelli1999lectures}.
\begin{definition}
\label{def:spatialmixing}
We say that a probability measure $\mu$ on $\spin^V$ has the \emph{spatial mixing} property if there exist constants $C_1,C_2 > 0$ such that the following holds: for any two subsets $B \subseteq \Lambda \subseteq V$ and any function $f$ supported on $B$
\begin{equation}
\sup_{\eta,\tau \in \spin^{\bdex \Lambda} } |\mu_{\Lambda}^{\eta}(f) - \mu_{\Lambda}^{\tau}(f)| \leq \| f \|_{\infty} C_1 \sum_{i \in B, j \in \bdex \Lambda} \exp(-C_2 \dist(i,j)).
\end{equation}
For simplicity of expressions, we will be using the following implied inequality
\begin{equation}
\label{eq:spatialmixing}
\sup_{\eta,\tau \in \spin^{\bdex \Lambda} } | \mu_{\Lambda}^{\eta}(f) - \mu_{\Lambda}^{\tau}(f)| \leq \| f \|_{\infty} C_1 |B| |\bdex \Lambda| \exp(-C_2 \dist(B, \Lambda^c)).
\end{equation}
\end{definition}
\paragraph{Infinite systems} If the set of sites $V$ is infinite, e.g., the Ising model on the grid $V = \ZZ^d$, defining a Gibbs distribution is more subtle. The most standard definition is via the DLR equations: we say that a probability measure $\mu$ on the set of configurations $\{-1,+1\}^{V}$  is a Gibbs state if for all finite $\Lambda \subset V$, the marginal of $\mu$ on $\bar{\Lambda}$ satisfies~\eqref{eq:spinflip}. It turns out that for infinite systems, the Gibbs distribution might not be unique. Note that Definition~\ref{def:spatialmixing} can be used as is for infinite systems as it never refers to Gibbs distributions of the whole system but rather to the distributions $\mu_{\Lambda}^{\eta}$ which only depend on the Hamiltonian restricted to $\bar{\Lambda}$. In fact, this condition is known to hold for Ising models on the infinite grid $\ZZ^d$ provided $\beta$ is below the corresponding critical temperature~\cite[page 102]{martinelli1999lectures}. Note that when spatial mixing holds, it is simple to see that there is a unique Gibbs distribution that we denote $\mu$.

\begin{theorem}
\label{thm:dlr-convergence}
Assume that the Gibbs distribution $\mu$ has the spatial mixing property of Definition \ref{def:spatialmixing}. Then there are constants $C_1, C_2$ such that for any $B \subseteq \Lambda \subseteq V$ and any $f:\S\to \RR$ supported on $B$:
\begin{equation}
\LP^{\max}_{\Lambda}(f) - \LP^{\min}_{\Lambda}(f) \leq \| f \|_{\infty} C_1 |B| | \bdex \Lambda| \exp(-C_2 \dist(B,\Lambda^c)) \ .
\end{equation}
As a result, if the graph is a $d$-dimensional grid, $\mu(f)$ can be approximated with additive error $\eps$ by choosing $\Lambda$ to be a ball of radius $\Theta(\log(|B|/\eps))$ around the sites in $B$ which leads to a linear program of size $\exp(O(\log^d(|B|/\eps)))$.
\end{theorem}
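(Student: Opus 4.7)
The plan is to derive the theorem almost immediately from Proposition~\ref{prop:DLRconv} combined with the spatial mixing hypothesis. The key observation is that Proposition~\ref{prop:DLRconv} tells us every $\nu \in \LP_{\Lambda,\DLR}$ acts on observables supported on $\Lambda$ (and therefore in particular on observables supported on $B \subseteq \Lambda$) as a convex combination of the local Gibbs expectations $\eta \mapsto \mu_{\Lambda}^{\eta}(f)$. Hence
\[
\LP^{\min}_{\Lambda,\DLR}(f) \;\geq\; \min_{\eta \in \spin^{\bdex \Lambda}} \mu_{\Lambda}^{\eta}(f), \qquad \LP^{\max}_{\Lambda,\DLR}(f) \;\leq\; \max_{\eta \in \spin^{\bdex \Lambda}} \mu_{\Lambda}^{\eta}(f),
\]
so that
\[
\LP^{\max}_{\Lambda,\DLR}(f) - \LP^{\min}_{\Lambda,\DLR}(f) \;\leq\; \sup_{\eta,\tau \in \spin^{\bdex \Lambda}} |\mu_{\Lambda}^{\eta}(f) - \mu_{\Lambda}^{\tau}(f)|.
\]
Applying the spatial mixing bound~\eqref{eq:spatialmixing} to the right-hand side yields the stated exponential decay in $\dist(B,\Lambda^c)$.

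For the grid corollary I would instantiate $\Lambda$ as $\{v \in V : \dist(v, B) \leq r\}$ in the $d$-dimensional grid, so that $\dist(B, \Lambda^c) \geq r+1$. On the grid we have $|\bdex \Lambda| = O(|B|\, r^{d-1})$ and $|\Lambda| = O(|B|\, r^d)$. Choosing $r = \Theta(\log(|B|/\eps))$ (with a constant large enough compared to $1/C_2$) makes the right-hand side of the first bound at most $\eps$, and the resulting linear program has $2^{O(|\Lambda|)} = \exp(O(\log^d(|B|/\eps)))$ variables once $|B|$ is treated as a constant.

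A small sanity check I would carry out before writing the proof: the inequalities $\LP^{\min} \geq \min_\eta \mu_\Lambda^\eta(f)$ and $\LP^{\max} \leq \max_\eta \mu_\Lambda^\eta(f)$ should in fact be equalities, because for each fixed $\eta \in \spin^{\bdex \Lambda}$ the product measure $\mu_\Lambda^\eta \otimes \delta_\eta$ on $\spin^{\bar \Lambda}$ lies in $\LP_{\Lambda,\DLR}$ (this is exactly the converse direction of Proposition~\ref{prop:DLRconv}). This is reassuring but not needed for the theorem, which only requires the one-sided bound.

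I don't see a serious obstacle: the theorem is essentially a corollary of Proposition~\ref{prop:DLRconv}, and all the work has been packaged into the definition of spatial mixing and into the characterization of the feasible set of the DLR linear program as boundary-averaged local Gibbs expectations. The only care needed is to ensure that $\supp(f)\subseteq B \subseteq \Lambda$ so that Proposition~\ref{prop:DLRconv} applies (which is immediate), and to state clean bounds on $|\Lambda|$ and $|\bdex\Lambda|$ for the grid volume computation in the corollary.
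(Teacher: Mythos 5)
Your proposal is correct and follows essentially the same route as the paper: reduce via Proposition~\ref{prop:DLRconv} to a supremum over boundary conditions $\eta,\tau$ of $|\mu_\Lambda^\eta(f)-\mu_\Lambda^\tau(f)|$ and then invoke the spatial mixing bound~\eqref{eq:spatialmixing}. The paper phrases the reduction as $|\nu(f)-\nu'(f)| \leq \sup_{\eta,\tau}|\mu_\Lambda^\eta(f)-\mu_\Lambda^\tau(f)|$ rather than through the intermediate min/max inequalities, but this is cosmetic; your added observations about the grid volume count and the converse of the proposition are correct and consistent with the paper.
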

Note that this theorem holds even when $V$ is infinite. 
\begin{proof}
For any $\nu \in \LP_{\Lambda, \DLR}$, we know (from Proposition \ref{prop:DLRconv}) that there is a probability distribution $p(\eta)$ on $\eta \in \spin^{\bdex \Lambda}$ such that
\[
\nu(f) = \sum_{\eta} p(\eta) \mu_{\Lambda}^{\eta}(f).
\]
As such for any $f$ supported on $B \subseteq \Lambda$, and any $\nu, \nu' \in \LP_{\Lambda, \DLR}$, we have
\[
|\nu(f) - \nu'(f)| \leq \sup_{\eta,\tau \in \spin^{\bdex \Lambda} } | \mu_{\Lambda}^{\eta}(f) - \mu_{\Lambda}^{\tau}(f)| \leq \| f \|_{\infty} C_1 |B| | \bdex \Lambda | \exp(-C_2 \dist(B,\Lambda^c)),
\]
which proves the desired statement.
\end{proof}

\section{Linear programming relaxation via a Markov chain}
\label{sec:lp-mc} 

We now consider a second linear programming hierarchy to compute expectation values of Gibbs state $\mu$. This hierarchy is based on the characterization of the Gibbs distribution $\mu$ as the unique stationary measure of some local Markov chain. We start by introducing these local Markov chains which are also known as \emph{Glauber dynamics}. Recall that a Markov chain on $\S$ is described via the transition probabilities $P_{x,x'}$ of moving from $x \in \S$ to $x' \in \S$.

\begin{definition}
\label{def:glauber}
We say that a Markov chain on $\S$ with transition matrix $P=(P_{x,x'})_{x,x' \in \S}$ is \emph{local} if it satisfies the following:
\begin{enumerate}
\item Single-site update: $P_{x,x'} = 0$ if $x'$ differs from $x$ on more than one site
\item Locality: For any $x \in \S$ and $i \in V$, $P_{x,x^i}$ depends only on spins $x_j$ for $j$ in the neighborhood of $i$
\item Boundedness: $P_{x,x'} \leq \frac{1}{n}$ for all $x \neq x' \in \S$.
\end{enumerate}
\end{definition}
We can interpret such a Markov chain as follows: at each time step, a site $i \in V$ is chosen uniformly at random, and the $i$'th spin is flipped with a certain probability $c(i,x) \in [0,1]$. In this case the probability transition matrix is given by: $P_{x,x^i} = c(i,x)/n$, $P_{x,x} = 1-\sum_{i \in V} c(i,x)/n$, and $P_{x,x'} = 0$ otherwise. To satisfy the assumption of locality the function $c(i,x)$ should only depend on the spins $x_j$ where $j$ is in the neighborhood of $i$.

We will be interested in local Markov chains whose stationary distribution is the Gibbs distribution $\mu$. A sufficient condition for $\mu$ to be stationary for $P$ is that $P$ is reversible for $\mu$, i.e., $\mu(x) P_{x,x'} = \mu(x') P_{x',x}$. For local Markov chains, this reversibility condition simplifies to:
\[
P_{x,x^i} \mu(x) = P_{x^i,x} \mu(x^i) \qquad \forall i \in V, \; x \in \S
\]
which, using the definition of $\mu$ is equivalent to
\begin{equation}
\label{eq:rev}
P_{x,x^i} = P_{x^i,x} e^{-\nabla_i H(x)}
\end{equation}
where
\[
\nabla_i H(x) = H(x^i) - H(x).
\]
The simplest concrete example of a Markov chain satisfying the above assumptions is the heat-bath dynamics, where a given site $i \in V$ is flipped with probability
\begin{equation}
\label{eq:heat-bath}
c(i,x) = \frac{\exp(-H(x^i))}{\exp(-H(x)) + \exp(-H(x^i))} = \frac{e^{-\nabla_i H(x)}}{1+e^{-\nabla_i H(x)}}.
\end{equation}
Note that $\nabla_i H(x)$ only depends on the spins $x_j$ for sites $j$ that are neighbors of $i$, and so $c(i,x)$ is local as desired.

\paragraph{Observables} Assume $(X_t)_{t \geq 0}$ is a Markov chain with transition matrix $P$. If $f:\spin^V \to \RR$ is an observable, then $Pf(x) = \sum_{x' \in \S} P_{x, x'} f(x')$ is the expectation value of $f(X_1)$ conditioned on $X_0 = x$. More generally, $(P^t f)(x)$ is the expectation of $f(X_t)$ conditioned on $X_0 = x$. Using the locality of $P$, observe that
\begin{equation}
\label{eq:Pf}
Pf(x) = P_{x,x} f(x) + \sum_{i \in V} P_{x,x^i} f(x^i) = f(x) + \sum_{i \in V} P_{x,x^i} \nabla_i f(x),
\end{equation}
where we used the fact that $P_{x,x} = 1-\sum_{i \in V} P_{x,x^i}$.
Note that $\nabla_i f(x) = 0$ for $i \notin \supp(f)$. Since $P_{x,x^i}$ only depends on the spins $x_j$ in the neighborhood of $i$, the above shows that $\supp(Pf) \subseteq \overline{\supp(f)}$, where we recall that $\overline{\supp(f)}$ denotes the union of $\supp(f)$ with its external boundary.

\paragraph{LP hierarchy based on Glauber dynamics} Assume $P$ is a local Markov chain for which $\mu$ is an invariant distribution, i.e., $\mu P = \mu$. The latter equality can be written equivalently as: $\mu(Pg) = \mu(g)$ for all observables $g:\S\to \RR$. If we restrict $g$ to be supported on a certain $\Lambda \subseteq V$, then note that $\mu(Pg)$ and $\mu(g)$ only depend on the marginal of $\mu$ on $\bar{\Lambda}$. Given a subset $\Lambda \subseteq V$ we can thus consider the following set of distributions $\nu$ on $\bar{\Lambda}$:
\begin{equation}
\label{eq:LPMC}
\LP_{\Lambda,\MC} = \Big\{ \nu  \in D(\spin^{\bar{\Lambda}}) : 
\begin{array}[t]{l}
\nu(1) = 1 \qquad\qquad\qquad\qquad\qquad\quad\;\; \text{(Normalization)}\\
\nu(h) \geq 0 \quad \forall h \geq 0,\;\; \supp(h) \subseteq \bar{\Lambda} \quad \text{(Positivity)}\\
\nu(Pg) = \nu(g) \quad \forall g : \supp(g) \subseteq \Lambda \quad \text{(Stationarity)}
 \Big\}.
\end{array}
\end{equation}
By construction, the marginal of $\mu$ on $\bar{\Lambda}$ belongs to $\LP_{\Lambda,\MC}$.
Hence if $f$ is any observable supported on $B \subseteq \bar{\Lambda}$ we can obtain lower and upper bounds on $\mu(f)$ by solving the following linear programs:
\[
\LP^{\min}_{\Lambda,\MC}(f) = \min_{\nu} \{ \nu(f) : \nu \in \LP_{\Lambda,\MC} \}
\qquad
\LP^{\max}_{\Lambda,\MC}(f) = \max_{\nu} \{ \nu(f) : \nu \in \LP_{\Lambda,\MC} \}.
\]
These are linear programs in $|\spin|^{|\bar{\Lambda}|}$ variables and at most $|\spin|^{|\Lambda|}$ equality constraints.
\begin{remark}[Relation with DLR equations]
\label{rem:stationarity}
Let us rewrite the stationarity condition of \eqref{eq:LPMC} more explicitly.
If $\supp(g) \subseteq \Lambda$, and $\nu$ a distribution on $\spin^{\bar \Lambda}$, one can check that
\[
\begin{aligned}
\nu(Pg) - \nu(g) &= \sum_{x \in \spin^{\bar \Lambda}} \nu(x) ((Pg)(x) - g(x))\\
		&= \sum_{x \in \spin^{\bar \Lambda}} \nu(x) \sum_{i \in V} P_{x,x^i} \nabla_i g(x)\\
		&= \sum_{x \in \spin^{\bar \Lambda}} \sum_{i \in \Lambda} \nu(x) P_{x,x^i} \nabla_i g(x)\\
		&= \sum_{x \in \spin^{\bar \Lambda}} \sum_{i \in \Lambda}  (\nu(x^i) P_{x^i,x} - \nu(x) P_{x,x^i}) g(x),
\end{aligned}
\]
where in the last equality we used the fact that $\sum_{x \in \spin^{\bar \Lambda}} f(x) \nabla_i g(x) = \sum_{x \in \spin^{\bar \Lambda}} \nabla_i f(x) g(x)$.
By taking $g$ to be an indicator function of $\sigma_{\Lambda} \in \spin^{\Lambda}$ we see that the equation $\nu(Pg)=\nu(g)$ in the definition of $\LP_{\Lambda,\MC}$ is equivalent to the following linear equation in $\nu$:
\begin{equation}
\label{eq:explicitstationarity}
\sum_{\substack{x \in \spin^{\bar \Lambda}\\ x_{\Lambda} = \sigma_{\Lambda}}} \sum_{i \in \Lambda} \nu(x^i) P_{x^i,x} - \nu(x) P_{x,x^i} = 0.
\end{equation}
The outer summation over $x$ is effectively a summation over spin values on the boundary $\bdex \Lambda = \bar \Lambda \setminus \Lambda$.

Now assume that $P$ is reversible with respect to $\mu$, i.e., that \eqref{eq:rev} is satisfied. Then condition \eqref{eq:explicitstationarity} for all $\sigma \in \spin^{\Lambda}$ simplifies to
\begin{equation}
\sum_{\substack{x \in \spin^{\bar \Lambda}\\ x_{\Lambda} = \sigma_{\Lambda}}} \sum_{i \in \Lambda} P_{x^i,x} \Bigl( \nu(x^i)  - \nu(x) e^{-\nabla_i H(x)} \Bigr) = 0
\qquad
\forall \sigma \in \spin^{\Lambda}.
\end{equation}
We see that these equation are implied by the DLR equations \eqref{eq:LP-DLR}; more precisely we see that the DLR equations imposes that each term $\nu(x^i) - \nu(x) e^{-\nabla_i H(x)}$ of the sum is equal to zero.
In other words, we have $\LP_{\Lambda,\DLR} \subseteq \LP_{\Lambda,\MC}$.
\end{remark}

\paragraph{Convergence} We are now ready to state our main convergence theorem.
\begin{theorem}
\label{thm:convergenceLP}
Assume $G$ is a graph of bounded degree $\Delta$ with $n=|V|$ vertices. Let $P$ be a local Markov Chain according to Definition \ref{def:glauber} and assume that there are constants $c_1,c_2 > 0$ such that for all $t \geq 0$,
\begin{equation}
\label{eq:fast-mixing-assumption}
\sup_{\nu} \|\nu P^t - \mu\|_{1} \leq c_1 n(1-c_2/n)^t.
\end{equation}

Let $f:\spin^V \to \RR$ supported on $B \subset V$. Then there are constants $C_1,C_2 > 0$ that depend only on $\Delta$, $|B|$, and $c_1,c_2$ such that the following holds: for any $\Lambda \supseteq B$
\[
\LP^{\max}_{\Lambda, \MC}(f) - \LP^{\min}_{\Lambda, \MC}(f) \leq C_1 n \|f\|_{\infty} e^{-C_2 \dist(B,\Lambda^c)}.
\]
\end{theorem}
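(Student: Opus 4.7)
The plan is to iterate the stationarity constraints defining $\LP_{\Lambda,\MC}$ and combine the resulting identity with the fast mixing assumption via a disagreement-percolation coupling in the spirit of \cite{dyer2004mixing}. Fix $\nu \in \LP_{\Lambda,\MC}$ and set $d = \dist(B, \Lambda^c)$; since $\mu \in \LP_{\Lambda,\MC}$, it suffices to bound $|\nu(f) - \mu(f)|$. The first, purely algebraic step uses that single-site locality (Definition~\ref{def:glauber}) gives $\supp(Pg) \subseteq \overline{\supp(g)}$ for any $g$, whence by induction $\supp(P^s f)$ is contained in the $s$-neighbourhood of $B$, which lies in $\Lambda$ for all $s \leq d-1$. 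Applying the stationarity constraint $\nu(Pg) = \nu(g)$ to $g = P^{s-1} f$ and iterating yields
\[
\nu(f) = \nu(P^t f) \qquad \text{for all } 0 \leq t \leq d.
\]
Extending $\nu$ arbitrarily to a distribution $\tilde\nu$ on $\spin^V$ (the extension is irrelevant since $\supp(P^t f) \subseteq \bar\Lambda$) and using $\mu P^t = \mu$, one gets $\nu(f) - \mu(f) = (\tilde\nu - \mu) P^t(f)$ for every $t \leq d$.

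The technical heart of the proof is to bound the right-hand side by a quantity whose decay rate in $d$ is \emph{independent of $n$}. A direct application of~\eqref{eq:fast-mixing-assumption} only yields $c_1 n \|f\|_\infty (1 - c_2/n)^t$, i.e.\ decay rate $c_2/n$ in $t$, which at $t = d$ is nontrivial only once $d = \Omega(n \log n)$. I would upgrade this using the grand coupling of~\cite{dyer2004mixing}: couple two copies of the chain using the same random site $i_t$ and the same update threshold at each step, so that a disagreement at site $i$ after a step can only appear if one of $i$'s neighbours was already in disagreement. A disagreement present at time $0$ outside $\bar\Lambda$ can therefore reach a site of $B$ within $t$ steps only through a graph-path of length $\geq d$ whose vertices are updated in the correct order; a union bound over the $\leq |B|\Delta^d$ such paths combined with $\binom{t}{d}/n^d \leq (et/(dn))^d$ for the activation probability of each path gives, for $t = O(n)$, a coupling-disagreement probability on $B$ of at most $(c\Delta/d)^d$ for some universal $c$, which is super-exponentially small in $d$.

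The main obstacle is reconciling the two time scales: the disagreement-percolation bound controls localised coupling errors on the scale $t = O(n)$, while~\eqref{eq:fast-mixing-assumption} only guarantees global closeness of $\tilde\nu P^t$ to $\mu$ on the scale $t \sim n\log n$, and moreover the initial disagreement between $\tilde\nu$ and $\mu$ is not confined to $V \setminus \bar\Lambda$ but is generically spread over all of $V$. Following~\cite{dyer2004mixing}, I would split the time horizon into $O(\log n)$ blocks of length $O(n)$, run the localised coupling inside each block, and use the $(1-c_2/n)^{O(n)}$ mixing factor from each block to iteratively damp the portion of the disagreement that is not ``killed'' by the locality argument, accumulating the extra factor $n$ in the final prefactor. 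Combining the resulting estimate with the identity $\nu(f) - \mu(f) = (\tilde\nu - \mu) P^t(f)$ valid up to $t = d$ then produces the bound $C_1 n \|f\|_\infty e^{-C_2 d}$, with $C_1, C_2$ depending only on $\Delta$, $|B|$, and the mixing constants $c_1, c_2$.
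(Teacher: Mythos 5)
Your algebraic first step is sound: iterating the stationarity constraint $\nu(Pg)=\nu(g)$ with $g=P^{s}f$ indeed gives $\nu(f)=\nu(P^t f)$ for all $t\leq d:=\dist(B,\Lambda^c)$, since $\supp(P^{s}f)$ lies in the $s$-neighbourhood of $B\subseteq\Lambda$. But the analytic step that is supposed to close the argument is where the proposal has a genuine gap, and you yourself have correctly located it. The identity $\nu(f)-\mu(f)=(\tilde\nu-\mu)P^t f$ is only available for $t\leq d$, which is far too small to use~\eqref{eq:fast-mixing-assumption} (whose usefulness kicks in only at $t\gtrsim n\log n$); and the disagreement-percolation coupling you invoke cannot bound $(\tilde\nu-\mu)P^t f$ either, because it requires the two initial configurations to agree on $\bar\Lambda$, whereas $\tilde\nu$ and $\mu$ differ everywhere, including on $\bar\Lambda$. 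The proposed repair --- splitting time $\Theta(n\log n)$ into $O(\log n)$ blocks of length $O(n)$ and ``iteratively damping'' --- does not circumvent this: the stationarity constraint of $\LP_{\Lambda,\MC}$ simply cannot be applied past step $d$, so you have no mechanism to propagate an estimate from one block to the next, and it is not explained how the block argument would recover a decay rate in $d$ that is independent of $n$.

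The paper avoids this tension by applying the stationarity constraint \emph{once}, not iteratively, to a cleverly chosen test function: set $g=-\sum_{t=0}^{t_1-1}P^t f$ so that $Pg-g=f-P^{t_1}f$, then replace $g$ by $\tilde g$ (spins outside $\Lambda$ frozen) so that $\supp(\tilde g)\subseteq\Lambda$ and the constraint $\nu(P\tilde g-\tilde g)=0$ applies. After normalising $\mu(f)=0$ one gets
\[
|\nu(f)|=|\nu(f-(P\tilde g-\tilde g))|\leq \|(P-I)(g-\tilde g)\|_\infty+\|P^{t_1}f\|_\infty,
\]
and the two terms are controlled by exactly the two ingredients you identified, but in a way that decouples the time scales: $\|g-\tilde g\|_\infty$ is bounded by the speed-of-propagation Lemma~\ref{lem:speedpropagation} (which is the disagreement-percolation coupling you describe, applied to configurations agreeing on $\Lambda$ --- where it \emph{does} apply), and $\|P^{t_1}f\|_\infty$ by the fast-mixing hypothesis. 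Crucially, $t_1$ is now a free parameter not tied to $d$, and the choice $t_1=nr/(2v)$ makes both terms decay like $e^{-\Omega(r)}$ with a prefactor $O(n)$. In short: your building blocks are the right ones, but using the Poisson/Stein-type sum $g$ as the \emph{single} test function in the stationarity constraint is the missing idea that resolves the time-scale mismatch you ran into.
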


\begin{remark}
 To get an accuracy $\eps$, the above theorem means that we need to take $\Lambda$ such that $n e^{-\dist(B,\Lambda^c)} = O(\eps)$, i.e., $\dist(B,\Lambda^c) = \Omega(\log(n/\eps))$. If the graph is a $d$-dimensional grid, and $\Lambda$ is chosen to be a ball of radius $r = \Omega(\log(n/\eps))$ centered at $B$ then $|\Lambda| = O(r^d)$ and so the LP has size $2^{r^d}$. So the running time for accuracy $\eps$ is $O(\exp(C \log^{d}(n/\eps)))$.
\end{remark}
\begin{remark}
The condition~\eqref{eq:fast-mixing-assumption} is known to hold for the heat bath dynamics defined in~\eqref{eq:heat-bath} on Ising models at sufficiently small inverse-temperature $\beta$. More specifically, it holds for any Ising model on a graph with degree bounded by $\Delta$ provided $\Delta \tanh(\beta) < 1$~\cite[Theorem 15.1]{levin2017markov}
\end{remark}

We first need the following important lemma about the speed of propagation of information in local Markov chains. We have seen that if $f$ is supported on $B \subset V$, then $P^t f$ is supported on $B^t := \{i \in V : \dist(i,B) \leq t\}$. The lemma shows that $P^t f$ can actually be well approximated by a function supported on $B^{ct/n}$ for some constant $c > 0$. This result is well-known, see e.g., \cite[Lemma 3.2]{martinelli1999lectures}. Here, we adapt the proof of \cite[Lemma 3.1]{dyer2004mixing}.

\begin{lemma}
\label{lem:speedpropagation}
Assume the graph $G$ has $n=|V|$ nodes and is of maximum degree $\Delta > 1$, and let $P$ be a local Markov chain on $G$.
Let $f:\spin^V \to \RR$ be an observable supported on $B$, and let $\Lambda \supset B$. Call $r=\dist(B,\Lambda^c)$. Assume $x, x'$ are two spin configurations such that $x_i = x'_i$ for all $i \in \Lambda$. Then for any integer $t \geq 0$,
\[
|P^{t} f(x) - P^{t} f(x')| \leq \|f\|_{\infty} c e^{vt/n - r}.
\]
where $c = 8 |B| $ and $v = e^2 (\Delta - 1) > 0$.
\end{lemma}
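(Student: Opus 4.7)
The plan is to introduce the single-site influence function
\[
\phi_j(t) \;:=\; \sup_{y \in \spin^V} \bigl| \nabla_j P^t f(y) \bigr|,
\]
which measures the maximum change in $P^t f$ induced by flipping the spin at site $j$. The hypothesis $\supp(f) \subseteq B$ gives $\phi_j(0) \leq 2\|f\|_\infty$ for $j \in B$ and $\phi_j(0) = 0$ otherwise. A telescoping identity along any ordering of the sites where $x$ and $x'$ disagree yields $|P^t f(x) - P^t f(x')| \leq \sum_{j : x_j \neq x'_j} \phi_j(t)$. Since $x$ and $x'$ agree on $\Lambda$, every differing site lies in $\Lambda^c$ and therefore satisfies $\dist(j, B) \geq r$, so it suffices to bound $\sum_{j : \dist(j,B)\geq r} \phi_j(t)$.

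The core of the argument is a one-step recursion for $\phi_j$. I would couple two chains started at $y^j$ and $y$ by drawing the same uniform site $I \in V$ at each step and then using the optimal coupling of the two single-site updates at $I$. The locality of $P$ (Definition~\ref{def:glauber}) produces three cases. If $I = j$, the conditional update distributions at $j$ coincide (they depend only on the neighbors of $j$, where $y$ and $y^j$ agree), the coupling resolves the disagreement, and the step contributes $0$. If $I \notin \{j\}\cup N(j)$, the two update distributions at $I$ are identical, the disagreement remains confined to $j$, and the step contributes at most $\phi_j(t)$. If $I \in N(j)$, the two update distributions at $I$ may differ, producing at worst a second disagreement at $I$; inserting an intermediate configuration that differs from one chain only at $j$ and from the other only at $I$ gives the two-step triangle bound $\phi_j(t) + \phi_I(t)$. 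Averaging over $I$ uniform on $V$ yields
\[
\phi_j(t+1) \;\leq\; \Bigl(1 - \tfrac{1}{n}\Bigr) \phi_j(t) + \tfrac{1}{n} \sum_{i \sim j} \phi_i(t).
\]

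To extract exponential decay in $\dist(j, B)$, I would introduce the exponentially weighted sum
\[
G(z, t) \;:=\; \sum_{j \in V} \phi_j(t)\, z^{\dist(j, B)} \qquad (z \geq 1).
\]
The triangle inequality $\dist(j,B) \leq \dist(i,B)+1$ for $i \sim j$, combined with the refinement that every $i$ with $\dist(i,B) \geq 1$ has at least one neighbor on a shortest path to $B$ at distance $\dist(i,B)-1$ (which improves the branching factor from $\Delta$ to $\Delta - 1$), converts the recursion into $G(z, t+1) \leq (1 + (\Delta-1) z / n)\, G(z, t)$ up to a lower-order correction arising from the sites of $B$. Iterating and using $G(z,0) \leq 2|B|\|f\|_\infty$ gives $G(z, t) \lesssim |B|\|f\|_\infty\,\exp((\Delta - 1) z t / n)$, and the Markov-type bound $\sum_{j : \dist(j,B) \geq r} \phi_j(t) \leq z^{-r} G(z, t)$ yields the final estimate. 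Choosing $z = e^2$ and applying $e^{-2r} \leq e^{-r}$ to convert $-2r$ in the exponent to the required $-r$ produces $v = e^2(\Delta - 1)$ and, after tracking absolute constants, a prefactor $8|B|$. The main obstacle is the careful coupling analysis in the one-step recursion --- in particular the $I \in N(j)$ case where the two-step triangle inequality is invoked and the locality of $P$ must be used to justify that updates away from $\overline{N(j)}$ can be coupled exactly --- together with the tight handling of the $\Delta - 1$ branching factor that is needed to match the stated constant $v$.
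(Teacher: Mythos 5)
Your proposal takes a genuinely different route from the paper. The paper's argument couples two trajectories $(X_t),(Y_t)$ started at $x,x'$, observes that a disagreement can reach a site $j\in B$ only along a time-ordered simple path of updated vertices from $\Lambda^c$ to $j$, and bounds $\Pr[X_t[j]\neq Y_t[j]]$ by directly counting such paths, $\sum_{l\geq r}\Delta(\Delta-1)^{l-1}\binom{t}{l}n^{-l}$. Your argument instead works with the discrete influence $\phi_j(t)=\sup_y|\nabla_j P^t f(y)|$, derives a one-step Dobrushin-type recursion for $\phi_j$ via a single-step coupling, and then controls the exponential moment $G(z,t)=\sum_j \phi_j(t)z^{\dist(j,B)}$ and applies a Markov bound. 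Both are standard avatars of disagreement percolation: the path-counting version is closer to the cited source \cite{dyer2004mixing} and applies to any local chain satisfying Definition~\ref{def:glauber}; the recursion/generating-function version is arguably cleaner and makes the $\Delta-1$ branching factor appear naturally, and your bookkeeping of $G(z,0)$, the $B$-boundary correction, and the choice $z=e^2$ does lead to constants of the stated form.

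There is, however, one concrete gap. In the $I=j$ case you assert that the coupling resolves the disagreement because ``the conditional update distributions at $j$ depend only on the neighbors of $j$.'' Definition~\ref{def:glauber} only stipulates that the flip probability $P_{x,x^j}=c(j,x)/n$ depends on the spins in the (closed) neighborhood of $j$, which includes $x_j$ itself; indeed for the heat-bath chain \eqref{eq:heat-bath} one has $c(j,x)\neq c(j,x^j)$ in general, and what actually saves the day there is the identity $c(j,x)+c(j,x^j)=1$, which makes the two single-site \emph{output} distributions at $j$ coincide. For a generic local chain of Definition~\ref{def:glauber} this identity need not hold (e.g.\ $c(j,\cdot)\equiv 0.1$), the two update laws at $j$ differ, and the $I=j$ step can only be bounded by $\phi_j(t)$ rather than $0$. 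The recursion then loses the $(1-1/n)$ damping and becomes $\phi_j(t+1)\leq \phi_j(t)+\tfrac1n\sum_{i\sim j}\phi_i(t)$, which after the same analysis gives $v'=z^{-1}+(\Delta-1)z$ rather than $(\Delta-1)z$, i.e.\ a strictly larger exponent than the stated $v=e^2(\Delta-1)$. This does not affect the qualitative conclusion (constants $c,v$ of the right form exist, which is all that Theorem~\ref{thm:convergenceLP} needs), but it means your proof, as written, establishes the exact constants of the lemma only for heat-bath-type chains. By contrast the paper's path-counting argument never needs the $I=j$ resolution --- it only tracks \emph{creation} of disagreements, which requires a disagreeing neighbor --- so it works verbatim for all chains covered by Definition~\ref{def:glauber}. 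Either restate your $I=j$ step to give the bound $\phi_j(t)$ and accept a marginally larger $v$, or record the extra hypothesis $c(j,x)+c(j,x^j)=1$ (equivalently, the site-$j$ update resamples $x_j$ from a law depending only on $N(j)$) as the condition under which the step contributes $0$.
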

\begin{proof}
This is a direct consequence of \cite[Lemma 3.1]{dyer2004mixing} on the speed of propagation of information of the Glauber dynamics. We reproduce the argument here for convenience. Let $(X_t)_{t \geq 0}$ and $(Y_t)_{t \geq 0}$ be two copies of the Glauber dynamics with initial configurations respectively $x$ and $x'$ coupled as follows. At each time step $t\geq 1$, the vertex $i_t$ is chosen uniformly at random in $V$ and $U \in [0,1]$ is a uniform random variable. Then $X_{t+1}$ is obtained by flipping the spin $i_t$ of $X_t$ when $U \leq c(i_t, X_{t})$ and otherwise $X_{t+1} = X_t$. Similarly,  $Y_{t+1}$ is obtained by flipping the spin $i_t$ of $Y_t$ when $U \leq c(i_t, Y_t)$. Note that if $X_t$ and $Y_t$ coincide on the neighborhood of the vertex $i_t$, then $X_{t+1}[i_t] = Y_{t+1}[i_t]$, where we use the notation $X[i]$ to denote the $i$-th spin of the configuration $X$. More generally for a set $B \subseteq V$, we let $X[B]$ the string of spins in the set $B$. From the interpretation mentioned above, setting $X_0 = x$ and $Y_0 = x'$, we have
\[
\begin{aligned}
|P^{t} f(x) - P^{t} f(x')| &= |\EE[f(X_{t}) - f(Y_{t})]|\\
&\leq \Pr[X_{t}[B] \neq Y_{t}[B]] 2\|f\|_{\infty} \\
&\leq \sum_{j \in B} \Pr[X_t[j] \neq Y_t[j]] 2\|f\|_{\infty}.
\end{aligned}
\]
Note that in order to have $X_{t}[j] \neq Y_{t}[j]$, there has to be a path of disagreements from $\Lambda^c$ to $j$, i.e., $v_0, v_1, \dots, v_{l}$ with $v_0 \in \Lambda^c$ and $v_{l} = j$. Note that for a fixed path $v_0, \dots, v_{l}$, the probability that there exist times $0 < t_1 < t_2 < \dots < t_l \leq t$ such that $i_{t_{\alpha}} = v_{\alpha}$ for all $\alpha$ is at most $\binom{t}{l}\left(\frac{1}{n}\right)^{l}$. The number of simple paths of length $l$ going from $\Lambda^c$ to $j$ is at most $\Delta (\Delta-1)^{l-1}$. Recall that $\dist(\Lambda^c, B) = r$ which implies that $l \geq r$. Thus,
\begin{align*}
\Pr[X_t[j] \neq Y_t[j]]
&\leq \sum_{l=r}^{+\infty} \Delta (\Delta-1)^{l-1} \binom{t}{l}\left(\frac{1}{n}\right)^{l} \\
&\leq \frac{\Delta}{\Delta-1} \sum_{l=r}^{\infty} \left(\frac{e (\Delta-1) (t/n)}{l}\right)^{l}.
\end{align*}
If $r \geq e^2 (\Delta - 1) (t/n)$, we bound
\begin{align*}
\Pr[X_t[j] \neq Y_t[j]] &\leq 4 \left(\frac{e (\Delta-1)(t/n)}{r}\right)^{r}.
\end{align*}
In this case, this leads to
\[
\begin{aligned}
|P^{t} f(x) - P^{t} f(x')| 
&\leq 2\|f\|_{\infty} 4 |B| \left(\frac{e (\Delta-1)(t/n)}{r}\right)^{r} \\
&\leq 8 \| f \|_{\infty} |B| \exp(-r \log r) \leq  \| f \|_{\infty} c \exp(v t/n - r).
\end{aligned}
\]
In the case $r < e^2 (\Delta - 1) (t/n)$, then we simply use the bound 
\begin{align*}
|P^{t} f(x) - P^{t} f(x')| &\leq 2 \| f \|_{\infty} \leq \| f \|_{\infty} c \exp(v t/n - r).
\end{align*}
\end{proof}

We can now prove Theorem \ref{thm:convergenceLP}.

\begin{proof}[Proof of Theorem \ref{thm:convergenceLP}]
It is clear that $\LP_{\Lambda, \MC}^{\max}(f - \mu(f)) = \LP_{\Lambda, \MC}^{\max}(f)- \mu(f)$ so we may assume in our analysis that $\mu(f) = 0$.
In this case by the fast mixing assumption, we know that $\|P^t f\|_{\infty} \leq \|f\|_{\infty} c_1 n (1-c_2/n)^t$ for all $t \geq 0$.

Let $t_1 > 0$ to be fixed later and define $g = -\sum_{t=0}^{t_1 - 1} P^t f$. Note that
\begin{equation}
\label{eq:Pggf}
Pg = g+f-P^{t_1} f.
\end{equation}
Given a function $h:\{-1,+1\}^V\to \RR$, let $\tilde{h}$ be the function obtained from $h$ by fixing all the spins outside $\Lambda$ to some fixed value (say all +1) so that $\supp(\tilde{h}) \subseteq \Lambda$. By the constraints of the linear program we can then write, for any feasible $\nu$:
\begin{equation}
\label{eq:nufbound}
\begin{aligned}
|\nu(f)| &= |\nu( f - (P\tilde{g}-\tilde{g}) ) |\\
         &\leq \|f - (P\tilde{g}-\tilde{g})\|_{\infty}\\
         &= \|Pg-g - (P\tilde{g} - \tilde{g}) + P^{t_1} f\|_{\infty}\\
         &\leq \|(P-I) (g-\tilde{g})\|_{\infty} + \|P^{t_1} f\|_{\infty}
\end{aligned}
\end{equation}
where the first inequality follows from the positivity condition of $\nu$, and the following equality follows from \eqref{eq:Pggf}.

It remains to bound each term above individually. The first term can be bounded using Lemma \ref{lem:speedpropagation}. Indeed, using the latter we have, for any $x \in \{-1,+1\}^V$
\begin{equation}
|g(x) - \tilde{g}(x)| \leq \sum_{t=0}^{t_1-1} |P^{t} f(x) - \widetilde{P^t f}(x)| \leq t_1 \|f\|_{\infty} c e^{vt_1/n-r}.
\end{equation}
where $r = \dist(B,\Lambda^c)$. 
For the first term of \eqref{eq:nufbound}, note that $(P-I) (g-\tilde{g})(x) = \EE[(g - \tilde{g})(X_1) - (g - \tilde{g})(x) | X_0 = x] \leq 2 \| g - \tilde{g} \|_{\infty} \leq  2 t_1 \|f\|_{\infty} c e^{vt_1/n-r}$. As a result,
\[
\|(P-I) (g-\tilde{g})\|_{\infty} \leq 2 t_1 \|f\|_{\infty} c e^{vt_1/n - r}.
\]
The second term of \eqref{eq:nufbound} can be bounded by the mixing time assumption. Putting things together we get
\[
|\nu(f) | \leq 2 t_1 \|f\|_{\infty} c e^{vt_1/n - r} + \|f\|_{\infty} c_1 n (1-c_2/n)^{t_1}.
\]
Taking $t_1 = nr/2v$ gives
\[
|\nu(f) | \leq C_1 n \|f\|_{\infty} e^{-C_2 r}
\]
for some appropriate constants $C_1,C_2 > 0$, as desired.
\end{proof}

\section*{Acknowledgements}

HF would like to thank Oisín Faust and Shengding Sun for discussions that helped simplify the proof of Theorem \ref{thm:convergenceLP}, and Minjae Cho for discussions on the bootstrap. HF acknowledges funding from UK Research and
Innovation (UKRI) under the UK government’s Horizon Europe funding guarantee EP/X032051/1. OF acknowledges funding from the European Research Council (ERC Grant AlgoQIP, Agreement No. 851716).

\bibliography{bootstrap_refs}
\bibliographystyle{alpha}

\end{document}